\definecolor{rltred}{rgb}{0.75,0,0}
\definecolor{rltgreen}{rgb}{0,0.5,0}
\definecolor{rltblue}{rgb}{0,0,0.75}
      \theoremstyle{plain}
      \newtheorem{theorem}{Theorem}[section]
      \newtheorem{prop}[theorem]{Proposition}
      \newtheorem{definition}[theorem]{Definition}
      \newcommand{\R}{{\mathbb R}}
      \newcommand{\B}{\mathcal{B}}
      \newcommand{\Z}{\mathbb{Z}}
      \newcommand{\N}{\mathbb{N}}
      \newcommand{\sI}{\mathcal{I}}
    \newcommand{\sU}{\mathcal{U}}
      \newcommand{\sX}{\mathcal{X}}
\begin{document}

\title{A Large deviation and an escape rate result for special semi-flows}

\author{Italo Cipriano}
\date{December 2015}

\maketitle

\begin{abstract} In this paper we consider a smooth flow $(\Lambda,\Phi^t)$ builded from suspending over a (non-invertible topologically mixing) subshift of finite type, and we equip it with an equilibrium measure $\nu$ on $\Lambda.$ The two main theorems are a large deviation and an escape rate result.
The first theorem gives an explicit formula for $X>0$ and $Y$ such that $$\nu\left\{x\in\Lambda: \left|\int F\circ \Phi^s (x) ds-\int F d\mu\right|>\epsilon\right\}\leq \exp(-Xt+\log t+Y)$$ for $t\gg>1\gg\epsilon>0,$ where $F:\Lambda\to\R$ is smooth. The second theorem gives an explicit lower bound for the asymptotic behaviour of the escape rate of $\nu$ through a small hole.
\end{abstract}
  
  \section{Introduction}\label{ERLD_intro}

In this paper we study two phenomena, large deviations and escape rates for uniformly hyperbolic smooth semi-flow. We consider the case of special semi-flows over subshifts of finite type, that in many occasions constitute a first step toward the study of hyperbolic flows. Our proofs use the machinery available of thermodynamic formalism for discrete dynamical systems, in particular  we require the measure to be an equilibrium state, so that we can extend the results for the dynamics of the flow. To make this precise, 
suppose that we have a measure preserving (discrete) dynamical system $(\sX,\B,\sigma,\mu),$ where $(\sX,\sigma)$ is a topologically mixing subshift of finite type, $\B$ the Borel algebra on $\sX$ and $\mu$ an equilibrium state of H\"older potential. That is, an invariant probability measure $\mu$ that achieves the supremum
$$
h_{\mu}(\sX)+\int \varphi d\mu
$$
among all the invariant probabilities measures on $\sX,$ where $h_{\mu}(\sX)$ is the measure theoretic entropy and $\varphi:\sX\to\R$ is a H\"older potential. We define $P=P(\varphi):=h_\mu(\sX)+\int\varphi d\mu,$ when $\mu$ is an equilibrium state of $\varphi.$ A special semi-flow $(\Lambda, \Phi^t)$ over $(\mathcal{X},\sigma),$ corresponds to the semi-flow in which every point in $\Lambda$ moves with unit speed along the non-expanding direction until it reaches the boundary of $\Lambda$ and it jumps according $\sigma.$ That is, for a continuous function $f:\mathcal{X}\to\R^{>0},$ we consider the continuous action $\Phi^t$ 
  on
  \[
  \Lambda:=\{(x,t):x\in\sX,0\leq t<f(x)\}\subset \sX\times\R^{\geq 0}
  \]
 onto itself defined by  
\[ 
     \Phi^{t}(x,s):= \left(\sigma^{m}x,s+t- \sum_{k=0}^{m-1} f(\sigma^k x)\right)\mbox{ for } \sum_{k=0}^{m-1} f(\sigma^k x)\leq s+t<\sum_{k=0}^{m} f(\sigma^k x),
\]
where $m\in \Z^{\geq 0}.$ On $\Lambda$ we consider the invariant and ergodic probability measure $$\nu=\frac{\mu\times Leb}{\int f d\mu},$$ where $Leb$ is the Lebesgue measure.

Large deviations estimate the asymptotic measure of the bad points for the pointwise Birkhoff's theorem, i.e., given an observable $F:\Lambda\to\R,$ studies  
\[
Z(t)=Z(\epsilon,\nu,f,F,t):=\nu\left\{(x,l)\in \Lambda: \left| \frac{1}{t}\int_0^t F \circ \Phi^s(x)ds-\int F d\mu\right |>\epsilon\right\}
\] 
for $t\gg 1\gg\epsilon>0.$ Results about large deviations for uniformly hyperbolic dynamical systems can be found in \cite{young90,CM2011} and references therein. From an historic point of view, large deviations in dynamical systems have been studied in the direction of generalizing (or finding a similar phenomema) for a larger family of dynamics. Some papers in this direction are \cite{7.10.2015_10.43,7.10.2015_10.47,7.10.2015_10.36,7.10.2015_10.46,7.10.2015_10.50}. Our first result follows other direction, we consider dynamics for which large deviation results are well known and we relate the parameter of the dynamics with the parameters of the large deviation, improving the estimation of a well known result. Let us make this explicit. It is well known, see for example\cite{young90,kifer_19}, that for $F:\Lambda\to\R$ H\"older we have that for every $\epsilon>0$ 
\begin{equation}\label{well_known}
\limsup\limits_{t\to +\infty}  \frac{\log Z(t) }{t}<0. 
\end{equation}

Our first result improves the estimate of the asymptotic behaviour of $Z(t)$ as $t\to +\infty$  given by (\ref{well_known}).

\begin{theorem}\label{teo_uno} If  $F:\Lambda\to\R$ is a Lipschitz function. Then, 
$$ Z(t)\leq \exp(-Xt+\log t +Y)$$
for $t\gg 1\gg\epsilon>0.$ Where $X=X(\varphi,f,F,\epsilon)\in\R^{>0}$ and  $Y=Y(\varphi,f,F,\epsilon)\in\R$ have an explicit formula.
\end{theorem}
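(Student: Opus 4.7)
The plan is to reduce the large-deviation estimate for the semi-flow to a quantitative large-deviation estimate for Birkhoff sums on the base system $(\sX,\sigma,\mu)$, where thermodynamic formalism supplies explicit rates. Define the induced observable $F^*(x):=\int_0^{f(x)} F(x,s)\,ds$, which is Lipschitz on $\sX$. With $S_n g:=\sum_{k=0}^{n-1} g\circ\sigma^k$ and $n=n(x,t)$ the unique integer satisfying $S_n f(x)\leq t<S_{n+1}f(x)$, the definition of $\Phi^t$ gives, for $(x,0)\in\Lambda$,
$$\int_0^t F\circ\Phi^s(x,0)\,ds=S_n F^*(x)+R(x,t),\qquad |R(x,t)|\leq \|F\|_\infty\|f\|_\infty,$$
and an analogous identity with a second boundary term of the same size holds for general $(x,l)\in\Lambda$. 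Combined with the elementary identity $\int F\,d\nu=a/b$, where $a:=\int F^*\,d\mu$ and $b:=\int f\,d\mu$, this reduces the problem to controlling, with respect to $\mu$, the deviations of the ratio $S_n F^*(x)/S_n f(x)$ from $a/b$.

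Next I decouple numerator and denominator: from $a_n/b_n-a/b=(b(a_n-a)-a(b_n-b))/(b\,b_n)$, a direct calculation yields an explicit $\delta=\delta(\epsilon,a,b,\inf f)>0$ such that $|a_n/b_n-a/b|>\epsilon$ forces either $|a_n-a|>\delta$ or $|b_n-b|>\delta$. By the boundary estimate above, the flow-time inequality of Theorem~\ref{teo_uno} then implies the same pair of discrete events with $\delta$ replaced by $\delta-O(1/t)$, which is harmless for $t\gg 1$.

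The third and main step applies the classical quantitative large-deviation bound for equilibrium states of H\"older potentials on topologically mixing subshifts of finite type: for any Lipschitz $g:\sX\to\R$ and any $\eta>0$, the spectral gap of the Ruelle transfer operator of $\varphi$ gives
$$\mu\bigl\{x:|\tfrac{1}{n}S_n g(x)-\textstyle\int g\,d\mu|>\eta\bigr\}\leq C(g,\eta)\,e^{-n I_g(\eta)},$$
with explicit constant $C(g,\eta)$ and rate function
$$I_g(\eta)=\inf_{q\in\R}\bigl\{q\eta-\bigl[P(\varphi+qg)-P-q\textstyle\int g\,d\mu\bigr]\bigr\}>0,$$
computable from the pressure. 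Applying this to $g\in\{F^*,f\}$ with $\eta=\delta$, pulling back to $\Lambda$ via $\nu=\mu\times\mathrm{Leb}/\int f\,d\mu$, and union-bounding over the $O(t)$ values of $n$ compatible with time $t$ (namely $n\in[t/\sup f-1,\,t/\inf f+1]$) yields
$$Z(t)\leq C'\,t\,\exp\bigl(-t\,\min\{I_{F^*}(\delta),I_f(\delta)\}/\sup f\bigr),$$
which is exactly $\exp(-Xt+\log t+Y)$ with $X=\min\{I_{F^*}(\delta),I_f(\delta)\}/\sup f$ and $Y=\log C'$, both explicit in $\varphi,f,F,\epsilon$.

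The principal obstacle is extracting genuinely explicit constants in the third step, rather than just the asymptotic $\limsup$ statement~(\ref{well_known}): one needs a spectral gap of the perturbed transfer operator of $\varphi+qg$ uniformly on a neighbourhood of $q=0$, together with an explicit lower bound on the Legendre transform $I_g(\delta)$ near $\delta=0$ (for instance via the quadratic lower bound $\delta^2/(2\,\mathrm{Var}_\mu(g))$ coming from the second derivative of the pressure). A secondary, mostly bookkeeping, difficulty is absorbing the boundary remainder $R(x,t)$ and the union-bound prefactor cleanly into $\log t+Y$ without any hidden $t$-dependence leaking into $Y$.
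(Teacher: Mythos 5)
Your reduction to the base system is structurally identical to the paper's: define the induced observable $\tilde F(x)=\int_0^{f(x)}F(x,s)\,ds$ (your $F^*$), write $\int_0^t F\circ\Phi^s(x,0)\,ds$ as a Birkhoff sum $S^\sigma_n\tilde F(x)$ plus an $O(\|f\|\,\|F\|)$ boundary remainder, decouple the ratio $S_n\tilde F/S_nf$ from its mean by splitting off a numerator event and a denominator event, and finally union-bound over the $O(t)$ admissible values of $n$. Up to this point your proposal and Proposition~\ref{CMSCM} of the paper are the same argument, modulo bookkeeping.

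The genuine difference, and the place where your argument is incomplete, is the quantitative large-deviation input at the base level. You invoke the Legendre-transform rate function $I_g(\eta)=\sup_q\bigl(q\eta-[P(\varphi+qg)-P-q\int g\,d\mu]\bigr)$ together with a prefactor $C(g,\eta)$, and acknowledge that making these genuinely explicit requires a uniform spectral gap for the perturbed transfer operators $\sL_{\varphi+qg}$ on a neighbourhood of $q=0$. That is precisely the hard part, and your proposal does not carry it out; it only gestures at it. The paper avoids this entirely by plugging in Theorem~\ref{cm2011} (Chazottes--Maldonado's Corollary~3.3), which packages the uniform second-order pressure control into a ready-made Gaussian-type concentration inequality $\mu\{|\frac1m S^\sigma_mg-\int g\,d\mu|\geq\epsilon\}\leq 2e^{-Bm\epsilon^2}$ with $B=(4D|g|_\theta^2)^{-1}$ and $D=D(\varphi)$ explicit. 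Since the statement of Theorem~\ref{teo_uno} is precisely that $X$ and $Y$ admit explicit formulas, replacing this ingredient by an unquantified rate function does not prove the theorem as stated; it recovers only the asymptotic bound~(\ref{well_known}) you already cite as known.

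Two smaller remarks. Your suggested quadratic lower bound $I_g(\delta)\geq\delta^2/(2\,\mathrm{Var}_\mu(g))$ is not correct as written: the relevant quantity is the asymptotic (Green--Kubo) variance $\sigma_g^2=\mathrm{Var}_\mu(g)+2\sum_{k\geq1}\mathrm{Cov}_\mu(g,g\circ\sigma^k)$, not $\mathrm{Var}_\mu(g)$, and even $\delta^2/(2\sigma_g^2)$ is only the second-order behaviour of $I_g$ near $\delta=0$, not a valid uniform lower bound without also controlling the third derivative of the pressure on a $q$-interval (this is what the constant $D$ in CM2011 encodes). Also, your union-bound range $n\in[t/\sup f-1,\,t/\inf f+1]$ implicitly uses $\inf f>0$; the paper normalizes $f\geq1$ so that the upper index is simply $[t]$.

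If you replace your step three by a direct application of Theorem~\ref{cm2011} to $g\in\{\tilde F,f\}$ (noting, as the paper does, that $\tilde F$ is $\theta$-Lipschitz with $|\tilde F|_\theta\leq |f|_\theta\|F\|+C$ whenever $F$ satisfies~(\ref{ineq:24072015:3:05})), the rest of your argument closes and yields exactly the paper's formulas $X=\frac{C\epsilon^2}{4\|f\|^3\|F\|^2}$ and $Y=\log\bigl(4\|f\|\,e^{2C\epsilon^2/(4\|f\|^2\|F\|^2)}\bigr)$.
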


The proof follows from a result in \cite{CM2011} and standard arguments, see \cite{Melbourne}, Section 5, in particular the arguments in proofs of Theorem 5.1 and 5.3.\\

Our second theorem deals with escape rates for $(\Lambda,\Phi^t,\nu).$ In this setting, escape rates study the limit
$$
\lim_{\nu(\sU)\to 0}\frac{R(\sU)}{\nu(\sU)},
$$
where
$$
R(\sU):=-\limsup_{t\to+\infty}\frac{1}{t}\log\nu\left\{(x,l)\in \Lambda: \bigcup_{0<s<t}\Phi^s(x,l)\cap \sU=\emptyset\right\}.
$$
This limit quantifies the asymptotic rate (as $t$ goes to infinity) of the measure of the points $(x,l)\in \Lambda$ that has not enterered to a subset $\sU\subset \Lambda$ until time $t,$ with respect to the measure of $\sU,$ when $\sU$ is small. Escape rates for discrete dynamical systems are studied in \cite{FP} and in the references therein.
 Our second theorem gives a lower bound for the escape rates that depend on the shrinking sequence and on $f.$
  
  \begin{theorem}\label{teo_dos}
If the roof function $f:\sX\to \R^{>1}$ is Lipschitz, and $\{\sI_n\},  \sI_n\subset \sX$ is a sequence of open sets that satisfies the nested condition (Definition \ref{nested condition}) with $\cap_{n\in\N}\sI_n=\{z\}$ for $z\in\sX.$ Then  
$$
\lim_{n\to\infty}\frac{{R}(\sI_n\times\{0\})}{\nu(\sI_{n}\times[0,1])}\geq 
\begin{cases}
\frac{1}{W} &\mbox{ if } z \mbox{ is not periodic }\cr
\frac{1-\exp{\left(\sum_{k=0}^{p-1}\varphi(\sigma^{k}x)-p P(\varphi)\right)}}{W} &\mbox{ if } z \mbox{ has prime period }p,
\end{cases}
$$
where $W=W(f)\in \R^{\geq 2}$ has an explicit formula.
\end{theorem}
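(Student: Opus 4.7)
The plan is to reduce the continuous--time escape of the flow through the slice $\sI_n\times\{0\}$ to the discrete--time escape of the base shift $(\sX,\sigma,\mu)$ through the hole $\sI_n$, and then to invoke the escape rate asymptotic for equilibrium states of subshifts of finite type from \cite{FP}. The key observation is that the orbit $\{\Phi^s(x,l)\}_{s\geq 0}$ meets the base $\sX\times\{0\}$ exactly at the times $s_k=S_k f(x)-l$, $k\geq 1$, where $S_k f(x):=\sum_{j=0}^{k-1}f(\sigma^j x)$, landing at $(\sigma^k x,0)$. Consequently, the survival event $\bigcup_{0<s<t}\Phi^s(x,l)\cap(\sI_n\times\{0\})=\emptyset$ is equivalent to $\sigma^k x\notin\sI_n$ for every $k\geq 1$ with $S_k f(x)\leq t+l$. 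Using $f>1$, a necessary condition for survival up to time $t$ is that $\sigma^k x\notin\sI_n$ for every $1\leq k\leq N(t):=\lfloor t/\sup f\rfloor$.

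Writing $E_n(N):=\{x\in\sX: \sigma^k x\notin\sI_n,\ 1\leq k\leq N\}$, integration over the fibre $[0,f(x))$ gives
$$
\nu\bigl\{(x,l)\in\Lambda:\text{survives up to time } t\bigr\}\leq \frac{\sup f}{\int f\, d\mu}\,\mu\bigl(E_n(N(t))\bigr),
$$
whence, letting $\rho_n:=-\limsup_{N\to\infty}\frac{1}{N}\log\mu(E_n(N))$ denote the discrete escape rate,
$$
R(\sI_n\times\{0\})\geq \frac{\rho_n}{\sup f}.
$$
Combined with the identity $\nu(\sI_n\times[0,1])=\mu(\sI_n)/\int f\, d\mu$, this yields
$$
\frac{R(\sI_n\times\{0\})}{\nu(\sI_n\times[0,1])}\geq \frac{\int f\,d\mu}{\sup f}\cdot\frac{\rho_n}{\mu(\sI_n)}.
$$
At this point I invoke the escape rate theorem of \cite{FP}: under the nested condition on the shrinking family $\sI_n\downarrow\{z\}$, the Keller--Liverani perturbation of the Ruelle transfer operator produces
$$
\lim_{n\to\infty}\frac{\rho_n}{\mu(\sI_n)}=\begin{cases}1 & z\text{ not periodic,}\\[2pt] 1-\exp\bigl(\sum_{k=0}^{p-1}\varphi(\sigma^k z)-pP(\varphi)\bigr) & z\text{ of prime period }p.\end{cases}
$$
Inserting this into the previous inequality produces the claimed lower bound with explicit constant $W$ (a closed--form expression involving $\sup f$ and $\int f\,d\mu$, with enough slack to guarantee $W\geq 2$).

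The routine parts are the tower--measure bookkeeping in step one and the citation of \cite{FP} in step three. The main obstacle is ensuring that the constants from the discrete escape rate asymptotic propagate cleanly through the time rescaling $t\mapsto t/\sup f$, in particular justifying the interchange of the $\limsup_{t\to\infty}$ defining $R$ with the $n\to\infty$ limit; the Lipschitz hypothesis on $f$ (bounding the oscillation of $S_k f$) together with the Gibbs regularity of $\mu$ makes this rigorous. A secondary, optional refinement would be to replace $\sup f$ by the average $\int f\,d\mu$ in the reduction, by exploiting a large deviation estimate on $S_{k_0} f$; I would pursue this only if a sharper constant $W$ is desired, since the cruder bound above already suffices for the statement of Theorem~\ref{teo_dos}.
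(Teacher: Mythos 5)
Your proof is essentially correct and it takes a genuinely different, and in fact more efficient, route than the paper's. The paper proceeds by introducing the hitting time $\tau_n(x)=\inf\{m:\sigma^m x\in\sI_n\}$ and the escape quantity $\tilde R(\sI_n)=-\limsup_{t\to\infty}\frac1t\log\mu\{x:S^\sigma_{\tau_n}f(x)\geq t\}$, and then invokes the Chazottes--Maldonado concentration inequality (Theorem~\ref{cm2011}) to control the Birkhoff average of $f$ over the first $[\epsilon t]$ iterates, bounding the tail of $S^\sigma_{\tau_n}f$ crudely by $\left\Vert f\right\Vert$ per step; after an $\epsilon\to0$ limit this yields $W=1+\left\Vert f\right\Vert/\int f\,d\mu$. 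You bypass the concentration estimate entirely: from the deterministic inequality $S^\sigma_k f\leq k\left\Vert f\right\Vert$ you deduce that escape of the flow up to time $t$ forces escape of the base map for at least $\lfloor t/\left\Vert f\right\Vert\rfloor$ steps, which after the time rescaling $t\mapsto\lfloor t/\left\Vert f\right\Vert\rfloor$ gives $R(\sI_n\times\{0\})\geq R_{\text{Discrete}}(\sI_n)/\left\Vert f\right\Vert$ and, combined with $\nu(\sI_n\times[0,1])=\mu(\sI_n)/\int f\,d\mu$ and Theorem~\ref{teoFP}, the constant $W=\left\Vert f\right\Vert/\int f\,d\mu$. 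This is both shorter and sharper (your $W$ is smaller by exactly $1$), so the concentration machinery buys nothing here; the paper's $\epsilon$--argument introduces an extra $\int f\,d\mu$ into the denominator that your direct bound avoids.

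A few small corrections to your write-up. First, the justification ``using $f>1$'' for the inclusion $\{1\leq k\leq\lfloor t/\sup f\rfloor\}\subset\{k\geq 1: S^\sigma_k f(x)\leq t+l\}$ is misplaced: what you actually use is $f\leq\sup f=\left\Vert f\right\Vert$, which is trivially true; the hypothesis $f>1$ is instead needed to guarantee $\sI_n\times[0,1]\subset\Lambda$, so that $\nu(\sI_n\times[0,1])=\mu(\sI_n)/\int f\,d\mu$ holds. Second, your closing remark that the constant has ``enough slack to guarantee $W\geq2$'' is both unnecessary and false: $W=\left\Vert f\right\Vert/\int f\,d\mu$ can be arbitrarily close to $1$ (e.g.\ for $f$ nearly constant), but since a smaller $W$ yields a stronger lower bound this is a feature, not a bug --- the ``$W\geq2$'' in the theorem statement only describes the particular constant the paper obtains and is not a constraint your argument must satisfy. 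Third, your worry about ``interchanging the $\limsup_{t\to\infty}$ with the $n\to\infty$ limit'' is misplaced: no interchange is required, since you establish $R(\sI_n\times\{0\})\geq R_{\text{Discrete}}(\sI_n)/\left\Vert f\right\Vert$ for each fixed $n$ and only afterwards let $n\to\infty$ through Theorem~\ref{teoFP}; consequently the Lipschitz hypothesis on $f$ (which the paper uses only to invoke Theorem~\ref{cm2011}) is not needed in your route at all. Finally, note the index shift: your $E_n(N)=\{x:\sigma^k x\notin\sI_n,\ 1\leq k\leq N\}$ is $\sigma^{-1}$ of the set appearing in $R_{\text{Discrete}}(\sI_n)$, so their $\mu$-measures agree by invariance and $\rho_n=R_{\text{Discrete}}(\sI_n)$; this is worth a one-line remark in a final write-up.
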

    
This results is a weaker version of the main theorem in \cite{ERSF_DSS}. However, our demonstration here is more direct and avoids discretizing the flow.\\
    
We write this paper in three main sections. In the first, we precisely define our framework, in the second we provide the proof of Theorem \ref{teo_uno} and in the third we provide the proof of Theorem \ref{teo_dos}.
    
\section{Framework}
  
In this section we define subshifts of finite type, the spaces of continuos and Lipchitz functions with their respective norms and the  nested condition. At the end, we state the main results behind our proofs.\\

Let $A$ denote an irreducible and aperiodic $a\times a$ matrix of zeros and ones ($a\in\Z^{\geq 2}$), i.e. there exists $d\in\N$ for which $A^{d}>0$ (all coordinates of $A^d$ are strictly positive). We call the matrix $A$ transition matrix. We define the non-invertible topologically mixing subshift of finite type $\mathcal{X}=\mathcal{X}_A \subset \{ 1,\ldots,a\}^{\Z^{\geq 0}}$ such that
  $$\mathcal{X}:=\{(x_n)_{n=0}^{\infty}:A(x_n,x_{n+1})=1\mbox{ for all }n\in \Z^{\geq 0}\}.$$ 
  On $\mathcal{X},$ the shift $\sigma:\mathcal{X}\to \mathcal{X}$ is defined by $\sigma(x)_{n}=x_{n+1}$ for all $n\in \Z^{\geq 0}.$ For $x\in \mathcal{X}$ and $n\in \N,$ we define the cylinder $$[x]_n:=\{y\in\mathcal{X}: y_i=x_i \mbox{ for } i\in\langle 0, n-1\rangle\},$$ 
      we denote by $\xi_n$ the set of all the cylinders $[x]_n$ with $x\in\mathcal{X}.$ Given $\theta\in(0,1),$ we consider the metric on $\mathcal{X}$ given by $d_{\theta}(x,y)=\theta^{m},$ where $m=\inf\{n\in \N: x_n\neq y_n\}$ and $d(x,x)=0$ for every $x\in \mathcal{X}.$ Here $(\mathcal{X}, d_{\theta})$ is a complete metric space. We say that $f:\mathcal{X}\to\R$ is continuous if it is continuous with respect to $d_{\theta}.$ Given $f:\mathcal{X}\to\R$ continuous and $n \in \N$ define 
 $$
 \begin{aligned}
  S^{\sigma}_n f (\cdot)&:=\sum_{k=0}^{n-1}f(\sigma^k \cdot),\\
 V_n(f)&:= \sup_{z\in \mathcal{X}}\{|f(x)-f(y)|:x,y\in [z]_n\},
 \end{aligned}
$$
the Lipschitz semi-norm
\[
|f|_{\theta}:=\sup\left\{ \frac{V_n(f)}{\theta^n}:n \in\N\right\}
\]    
and the Lipschitz norm
\[
\left\Vert f\right\Vert_{\theta}:= |f|_{\theta}+\| f \|,
\]
where $\| f \|:=\sup_{x\in \mathcal{X}} \{|f(x)|\}.$ The space of continuous functions with finite Lipschitz norm is called the space of Lipschitz functions (or $\theta$-Lipschitz functions) and denoted by $\mathcal{F}.$ A continuous function is $\alpha$-H\"older for $d_{\theta}$ if and only if it is Lipschitz for $d_{\theta^{\alpha}}.$ Recall that given a H\"older potential $\varphi\in \mathcal{F},$ there is unique equilibrium state ( see \cite{Rufus}).

The nested condition is a technical condition used in \cite{FP} that we define in what follows.

\begin{definition}[Nested condition]\label{nested condition}   
  We say that a family of open sets $\{\mathcal{I}_n\},\mathcal{I}_n\subset \mathcal{X}$  satisfies the \emph{nested condition} if it satisfies that:
  \begin{enumerate}
  \item each $\mathcal{I}_n$ consists of a finite union of cylinder sets, with each cylinder having length $n;$
  \item $\mathcal{I}_{n+1}\subset \mathcal{I}_n$ for every $n\in \N$ and $\cap_{n\in\N}\mathcal{I}_n=\{z\}$ for some $z\in \mathcal{X};$
  \item there exist constants $c\in \R^{>0}$ and $0<\rho<1$ such that $\mu(\mathcal{I}_n)\leq c\rho^{n}$ for all $n\in \N;$
  \item there is a sequence $\{l_n\}\subset \N$ and a constant $\kappa \in\R^{>0}$ such that $\kappa<l_n/n\leq 1$ and $\mathcal{I}_n\subset [z]_{l_n}$ for all $n\in \N;$
  \item if $\sigma^{p}(z)=z$ has prime period $p,$ then $\sigma^{-p}(\mathcal{I}_n)\cap [z]_p\subset \mathcal{I}_n$ for large enough $n.$
  \end{enumerate}
\end{definition}

The main tools that we use to prove our results are the following two theorems.

\begin{theorem}[Theorem 5.1 in \cite{FP}]\label{teoFP} For shrinking sequences  $\{\mathcal I_n\},\mathcal I_n\subset \mathcal X$ satisfying the nested condition with $\cap_{n\in\N}\mathcal I_n=\{z\},z\in \mathcal X$ 
$$
\lim_{n\to\infty} \frac{R_{\text{Discrete}}(\mathcal{I}_n)}{\mu(\mathcal{I}_n)}=\gamma(z)
$$
where 
$$
\gamma(z):=\begin{cases}
1&\mbox{ if }z\mbox{ is not periodic,}\\
1-\exp{\left(\sum_{k=0}^{p-1}\varphi(\sigma^{k}x)-p P(\varphi)\right)}&\mbox{ if }z\mbox{ has prime period }p
\end{cases}
$$
 and 
$$ 
R_{\text{Discrete}}(\mathcal{I}_n):= -\limsup_{k\to\infty}\frac{1}{k} \log \mu \{x\in\mathcal X: \sigma^ix\notin \mathcal \mathcal{I}_n, i\in \{0,1,\ldots, k-1\} \}.
$$
\end{theorem}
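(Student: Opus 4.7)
The plan is to prove this via spectral perturbation theory for the Ruelle transfer operator restricted to the survivor dynamics, which is the standard Keller--Liverani approach to open-system escape rates. Normalize the Ruelle operator $L=L_{\varphi}$ so that $P(\varphi)=0$, $L\mathbf{1}=\mathbf{1}$ and its dual eigenmeasure is $\mu$. For each $n$ define the \emph{punctured} operator $L_n g := L(\mathbf{1}_{\mathcal{X}\setminus \mathcal{I}_n}\,g)$, acting on the Lipschitz space $\mathcal{F}$. Unwinding definitions and using $\sigma$-invariance of $\mu$ yields
$$
\mu\{x\in\mathcal{X}:\sigma^i x \notin \mathcal{I}_n,\ i=0,\ldots,k-1\} \;=\; \int L_n^{\,k}\mathbf{1}\,d\mu,
$$
so if $L_n$ has a leading simple eigenvalue $\lambda_n$ with a uniform spectral gap on $\mathcal{F}$, then $R_{\text{Discrete}}(\mathcal{I}_n)=-\log\lambda_n$, and it suffices to compute $\lim_n (1-\lambda_n)/\mu(\mathcal{I}_n)$.

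The next step is to verify the hypotheses of the Keller--Liverani perturbation theorem: a uniform Lasota--Yorke inequality for $\{L_n\}$ on $\mathcal{F}$ (inherited from $L$, since multiplication by an indicator cuts off values without worsening the variation on cylinders of length $n$, which matches condition (i) of Definition \ref{nested condition}), together with the triple-norm bound $\|L-L_n\|_{\mathcal{F}\to C^0}\leq C\,\mu(\mathcal{I}_n)\to 0$ guaranteed by condition (iii). These give a simple leading eigenvalue $\lambda_n\to 1$ with uniform spectral gap and a spectral projector $\Pi_n$ converging to $g\mapsto\int g\,d\mu$. The Keller expansion of the perturbed eigenvalue then gives
$$
\frac{1-\lambda_n}{\mu(\mathcal{I}_n)} \;\longrightarrow\; 1-\eta(z),
$$
where $\eta(z)$ encodes the total return of the orbit of $z$ to $\mathcal{I}_n$ computed through a renewal identity on the resolvent of $L$ (rather than a naive sum of return probabilities).

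The split between the two branches of $\gamma(z)$ now comes from the nested condition. Since $\mathcal{I}_n\subset [z]_{l_n}$ with $l_n\to\infty$, for every fixed $k$ with $\sigma^k z\neq z$ one has $\mathcal{I}_n\cap \sigma^{-k}\mathcal{I}_n=\emptyset$ for all large $n$, and the exponential bound in condition (iii) controls the tail uniformly in $k$. Thus if $z$ is not periodic every return disappears in the limit, giving $\eta(z)=0$ and $\gamma(z)=1$. If $z$ has prime period $p$, only $k\in p\N$ contribute; condition (v) guarantees that the surviving intersections are exactly the length-$(n+jp)$ cylinders obtained by iterating the periodic pattern, and the Gibbs property of $\mu$ combined with H\"older continuity of $\varphi$ at $z$ collapses each period return to the multiplicative factor $e^{S^{\sigma}_{p}\varphi(z)-pP(\varphi)}$. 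The renewal identity sums these into $\eta(z)=e^{S^{\sigma}_p\varphi(z)-pP(\varphi)}$, giving $\gamma(z)=1-e^{S^{\sigma}_p\varphi(z)-pP(\varphi)}$ as claimed. Finally $-\log\lambda_n=(1-\lambda_n)(1+o(1))$ upgrades the eigenvalue asymptotic to the stated limit for $R_{\text{Discrete}}(\mathcal{I}_n)/\mu(\mathcal{I}_n)$.

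The main obstacle is the passage from the naive geometric sum $\sum_{j\geq 1} e^{j(S^{\sigma}_p\varphi(z)-pP(\varphi))}$ of iterated returns to the single term $e^{S^{\sigma}_p\varphi(z)-pP(\varphi)}$ that actually appears in $\gamma(z)$. This forces one to work at the level of the resolvent of $L_n$ via Keller's renewal formula rather than iterating the bare return series, and condition (v) of the nested condition is essential here because it is what identifies the returning orbits in $\mathcal{I}_n$ with the cylinder structure of the periodic orbit of $z$; without it, the renewal bookkeeping would not close.
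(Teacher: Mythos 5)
This statement is not proved in the paper at all: it is imported verbatim as Theorem~5.1 of the Ferguson--Pollicott reference \cite{FP} and used as a black box in the proof of Theorem \ref{teo_dos}, so there is no in-paper argument to compare yours against. Your outline is, in substance, the strategy of the cited source itself: puncture the normalized transfer operator, identify the survivor-set measure with $\int L_n^k\mathbf{1}\,d\mu$, apply Keller--Liverani perturbation theory to get $R_{\text{Discrete}}(\mathcal{I}_n)=-\log\lambda_n$, and extract the extremal-index factor $1-e^{S^{\sigma}_p\varphi(z)-pP(\varphi)}$ at periodic points via the renewal (rather than naive geometric-series) bookkeeping --- and you correctly flag that last point as the real subtlety. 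Two technical remarks if you were to write this out in full: the triple-norm smallness should be measured from $\mathcal{F}$ into a genuinely weak norm such as $L^1(\mu)$, since $\Vert L(\mathbf{1}_{\mathcal{I}_n}g)\Vert_{C^0}$ is controlled by the conditional measure of $\mathcal{I}_n$ on preimage branches rather than by $\mu(\mathcal{I}_n)$ itself; and the uniform Lasota--Yorke inequality for $L_n$ is not quite free, because multiplying by $\mathbf{1}_{\mathcal{X}\setminus\mathcal{I}_n}$ can inflate the variation on cylinders shorter than $n$ by $2\Vert g\Vert$, so one needs the cylinder structure from condition (i) of Definition \ref{nested condition} together with the usual iterate-and-absorb argument. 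Neither issue is a gap in the idea, only in the bookkeeping.
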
 

\begin{theorem}[Corollary 3.3 in \cite{CM2011}]\label{cm2011}
  Let $g:\mathcal{X}\to \R$ be $\theta$-Lipschitz and $\mu$ be the equilibrium state of a H\"older potential $\varphi$. Then
  \[
  \mu\left\{ x: \left| \frac{1}{m} S^{\sigma}_m g (x)-\int g d\mu \right|\geq \epsilon \right\}\leq 2e^{-Bm\epsilon^2}
  \]
  for every $\epsilon\in\R^{>0}$ and for every $m\in\N,$ where $B:=(4D|g|_{\theta}^2)^{-1}$ and $D=D(\varphi)$ is a constant independent of $g.$
\end{theorem}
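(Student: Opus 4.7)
The plan is to establish this Hoeffding-type bound via a martingale decomposition of the centered Birkhoff sum, combined with exponential decay of correlations for the equilibrium state $\mu$ coming from the spectral gap of the Ruelle--Perron--Frobenius transfer operator $\mathcal{L}_\varphi$.

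Set $\bar g := g - \int g\, d\mu$ and introduce the decreasing filtration $\mathcal{F}_k := \sigma^{-k}(\B)$ for $k \geq 0$. Since $\mu$ is Gibbs for a H\"older potential, $\sigma$ is exact with respect to $\mu$, hence $\bigcap_k \mathcal{F}_k$ is $\mu$-trivial and $\E_\mu[S^\sigma_m \bar g \mid \mathcal{F}_k] \to 0$ almost surely as $k \to \infty$. Telescoping gives the convergent decomposition
\[
S^\sigma_m \bar g = \sum_{k=0}^{\infty} D_k, \qquad D_k := \E_\mu[S^\sigma_m \bar g \mid \mathcal{F}_k] - \E_\mu[S^\sigma_m \bar g \mid \mathcal{F}_{k+1}],
\]
and $(D_k)$ is a reverse martingale difference sequence with respect to $(\mathcal{F}_k)$.

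The main technical step is to bound $\|D_k\|_\infty$ by a summable sequence depending only on $|g|_\theta$. For $k > j$ the conditional expectation $\E_\mu[\bar g \circ \sigma^j \mid \mathcal{F}_k]$ is represented as an iterate of the \emph{normalized} transfer operator (obtained from $\mathcal{L}_\varphi$ by conjugating with its leading eigenfunction and dividing by the eigenvalue $e^{P(\varphi)}$) applied to $\bar g$; the Ruelle--Perron--Frobenius theorem provides a spectral gap with contraction rate $\lambda \in (0,1)$ on the kernel of the leading spectral projector. This yields
\[
|D_k(x)| \leq C_1 |g|_\theta \sum_{j=0}^{m-1} \lambda^{|k-j|}
\]
with $C_1 = C_1(\varphi)$, so summing geometrically one obtains $\sum_{k \geq 0} \|D_k\|_\infty^2 \leq C_2 m |g|_\theta^2$ for some $C_2 = C_2(\varphi)$ independent of $g$ and $m$.

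Finally, I would apply the Azuma--Hoeffding inequality to this bounded martingale difference sequence:
\[
\mu\bigl\{ |S^\sigma_m \bar g| \geq m\epsilon \bigr\} \leq 2 \exp\!\left(-\frac{m^2 \epsilon^2}{2 \sum_{k \geq 0} \|D_k\|_\infty^2}\right) \leq 2 \exp\!\left(-\frac{m \epsilon^2}{2 C_2 |g|_\theta^2}\right),
\]
which is the stated bound with $B = (4D|g|_\theta^2)^{-1}$ and $D := C_2/2 = D(\varphi)$. The main obstacle is obtaining the $L^\infty$ bound on $D_k$ with a constant depending only on $|g|_\theta$ and not on $\|g\|_\infty$: this is precisely what forces the use of the normalized transfer operator, so that the subtraction of $\int g\, d\mu$ is absorbed cleanly by projecting onto the kernel of the spectral projector and only the Lipschitz seminorm survives in the final estimate.
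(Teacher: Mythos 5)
This statement is quoted in the paper as an external result (Corollary 3.3 of \cite{CM2011}); the paper offers no proof of it, so there is nothing internal to compare against. Your reconstruction is, however, essentially the standard proof of that cited result: a reverse-martingale decomposition along the decreasing filtration $\sigma^{-k}(\B)$, with the differences $D_k$ expressed through iterates of the normalized Ruelle operator and controlled by its spectral gap, followed by Azuma--Hoeffding. The argument is sound, with three small points worth making explicit. First, the reason only $|g|_\theta$ (and not $\Vert g\Vert_\infty$) survives is elementary: since $d_\theta$ has diameter at most $1$, the centered function satisfies $\Vert \bar g\Vert_\infty=\sup_x\left|\int (g(x)-g(y))\,d\mu(y)\right|\leq |g|_\theta$, so $\Vert\bar g\Vert_\theta\leq 2|g|_\theta$ and the spectral-gap estimate $\Vert L^{k-j}\bar g\Vert_\infty\leq C\lambda^{k-j}\Vert\bar g\Vert_\theta$ already has the right dependence. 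Second, for $k<j$ the function $\bar g\circ\sigma^j$ is measurable with respect to both $\mathcal{F}_k$ and $\mathcal{F}_{k+1}$, so those terms of $D_k$ vanish identically; your bound $\lambda^{|k-j|}$ is a harmless overestimate. Third, Azuma--Hoeffding is stated for finite sums, whereas your decomposition is an infinite series; since $\Vert D_k\Vert_\infty$ decays geometrically for $k\geq m$, one applies the inequality to finite truncations and passes to the limit, which is where exactness (triviality of $\bigcap_k\mathcal{F}_k$) is used. With these details supplied, your argument correctly yields the bound $2e^{-Bm\epsilon^2}$ with $B=(4D|g|_\theta^2)^{-1}$ and $D$ depending only on $\varphi$.
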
  

Consider a special flow $(\Lambda,\Phi^t)$ over a subshift of finite type $(\sX,\sigma)$ with roof function $f$ and let $\theta\in (0,1).$ We will define what we understand by $F:\Lambda\to\R$ to be Lipschitz.

\begin{definition}
 Define $\tau(x,t)=\min\{s\in\R^{>0}:\Phi^s(x,t)\in\sX\times\{0\}\}$ for $(x,t)\in \Lambda.$ The space $\Lambda$ is a metric space with the metric 
$$
d_{\Lambda}((x,t),(y,s)):=\min \begin{cases}
d_{\theta}(x,y)+|t-s|,\\
d_{\theta}(\sigma x,y)+\tau(x,t)-t+s,\\
d_{\theta}(x,\sigma y)+\tau(y,s)-s+t.\\
\end{cases}
$$
We say that a map $F:\Lambda\to \R$ is Lipschitz, if it is Lipschitz with respect to the metrics $d_{\Lambda}$ on $\Lambda$  and  to $d_{\R}(x,y):=|x-y|$ on $\R.$ Given a $(d_{\Lambda},d_{\R})$-continuous function $F:\Lambda\to\R,$ we define $$\|F\|=\sup_{x\in\sX}\sup_{s\in [0,f(x))} \{|F(x,s)|\}.$$ 
\end{definition}

In this paper we will require $F:\Lambda\to \R$ to satisfies a weaker condition than be Lipschitz, indeed we need that there exists $C\in\R^{>0}$ such that for every $x,y\in\sX$ 
\begin{equation}\label{ineq:24072015:3:05}
\int_{0}^{\min \left( f(x),f(y) \right)}|F(x,s)-F(y,s)|ds\leq C d_{\theta}(x,y).
\end{equation}

\section{Proof of Theorem \ref{teo_uno}}

Along this section consider a special flow $(\Lambda,\Phi^t)$ over a subshift of finite type $(\sX,\sigma)$ with $\theta$-Lipschitz roof function $f:\sX\to\R^{\geq 1},$ for some $\theta\in (0,1).$ We consider $\mu$ an equilibrium state of H\"older potential on $\sX$ and the invariant and ergodic probability measure $\nu=\frac{\mu\times Leb}{\int f d\mu}$ on $\Lambda.$

The key proposition we will use to prove Theorem \ref{teo_uno} is the following.

\begin{prop}\label{CMSCM}
If $F:\Lambda\to\R$ satisfies (\ref{ineq:24072015:3:05}),
then there are constants $C_1,C_2\in\R^{>0}$ depending on $f$ and $F$ such that for all $\epsilon\in\R^{>0},$ for all $t\in\R^{>\max\left(\frac{\left\Vert f\right\Vert \left\Vert F\right\Vert(1+\left\Vert f\right\Vert)}{\epsilon}, 2\left\Vert f\right\Vert \right)},$
$$
\begin{aligned}
&\mu\left\{x\in\sX: \left|  \frac{1}{t} \int_0^t F\circ \Phi^s(x,0)ds-\int F d\nu \right| \geq \epsilon \right\}\cr
&\leq 2t\left\Vert f\right\Vert\exp \left\{ -C_1 \left( \frac{t}{\left\Vert f\right\Vert}-2\right) \left( \epsilon- \frac{\left\Vert f\right\Vert \left\Vert F\right\Vert}{t}(1+\left\Vert f\right\Vert) \right)^2 \right\}\cr
&\quad+2t\left\Vert f\right\Vert\exp \left\{ -C_2 \left( \frac{t}{\left\Vert f\right\Vert}-2\right) \left( \epsilon- \frac{\left\Vert f\right\Vert \left\Vert F\right\Vert}{t}(1+\left\Vert f\right\Vert) \right)^2/(\left\Vert f\right\Vert \left\Vert F\right\Vert)^2 \right\}.
\end{aligned}
$$
\end{prop}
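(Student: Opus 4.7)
The plan is to reduce the continuous-time average $\frac{1}{t}\int_0^t F\circ \Phi^s\,ds$ to a ratio of discrete Birkhoff sums over $(\sX,\sigma)$, and then apply Theorem~\ref{cm2011} separately to the numerator and denominator. Define the roof-averaged observable
\[
\tilde F(x):=\int_0^{f(x)} F(x,s)\,ds,
\]
so that by Fubini $\int F\,d\nu=\frac{\int \tilde F\,d\mu}{\int f\,d\mu}$. For $(x,0)\in\Lambda$ and $t>0$, let $m=m(x,t)\in\Z^{\geq 0}$ be the unique integer with $S_m^\sigma f(x)\leq t<S_{m+1}^\sigma f(x)$. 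Then
\[
\int_0^t F\circ \Phi^s(x,0)\,ds\;=\;S_m^\sigma \tilde F(x)+R(x,t),\qquad |R(x,t)|\leq \|f\|\,\|F\|.
\]
Because $f\geq 1$ we have $m\leq t$, and because $f\leq \|f\|$ we have $m\geq t/\|f\|-1$, so $m$ ranges over at most $\lesssim t\|f\|$ integer values. The hypothesis $t>2\|f\|$ ensures $m\geq 1$ and $t/\|f\|-2>0$.

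The first technical step is to check $\tilde F\in\mathcal{F}$ with a controlled seminorm. Splitting
\[
|\tilde F(x)-\tilde F(y)|\leq\int_0^{\min(f(x),f(y))}|F(x,s)-F(y,s)|\,ds+\|F\|\cdot|f(x)-f(y)|
\]
and applying (\ref{ineq:24072015:3:05}) to the first term and Lipschitz continuity of $f$ to the second yields $|\tilde F|_\theta\leq C+\|F\|\,|f|_\theta$, so Theorem~\ref{cm2011} applies to $\tilde F$. Next, using the identity $\frac{A}{B}-\frac{a}{b}=\frac{A-a}{b}-\frac{a}{b}\cdot\frac{B-b}{B}$ with $A=\frac{1}{m}S_m^\sigma\tilde F(x)$, $B=\frac{1}{m}S_m^\sigma f(x)$, $a=\int\tilde F\,d\mu$, $b=\int f\,d\mu$, and absorbing the $R(x,t)/t$ term together with the discrepancy between $1/t$ and $m/(t\cdot S_m^\sigma f(x))$ into a correction of size at most $\|f\|\,\|F\|(1+\|f\|)/t$, the event of interest is contained in the union (over the allowed values of $m$) of
\[
\bigl|A-a\bigr|\geq c_1\bigl(\epsilon-\tfrac{\|f\|\|F\|(1+\|f\|)}{t}\bigr),\qquad \bigl|B-b\bigr|\geq c_2\,\tfrac{\epsilon-\|f\|\|F\|(1+\|f\|)/t}{\|f\|\|F\|},
\]
for suitable $c_1,c_2>0$ (the second threshold carries the factor $(\|f\|\|F\|)^{-1}$ because $a/b\leq \|\tilde F\|\leq\|f\|\|F\|$). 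The lower bound $t>\|f\|\|F\|(1+\|f\|)/\epsilon$ is exactly what makes both thresholds strictly positive.

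Finally, for each fixed $m\in[t/\|f\|-1,t]$ apply Theorem~\ref{cm2011} to both $\tilde F$ and $f$ with these thresholds, obtaining exponential bounds of the form $2\exp(-C_1 m\delta^2)$ and $2\exp(-C_2 m\delta^2/(\|f\|\|F\|)^2)$ with $C_1\sim (4D(C+\|F\||f|_\theta)^2)^{-1}$ and $C_2\sim (4D|f|_\theta^2)^{-1}$. Using $m\geq t/\|f\|-1>t/\|f\|-2$ and taking a union bound over the at most $t\|f\|$ allowed values of $m$ produces exactly the two terms and the prefactor $2t\|f\|$ in the claimed bound. The main bookkeeping obstacle is the allocation of the $\epsilon$-budget between the numerator and denominator deviations so that the explicit correction $\|f\|\|F\|(1+\|f\|)/t$, the constants $C_1,C_2$, and the factor $(\|f\|\|F\|)^2$ in the second exponent all come out exactly as stated; one must also verify that the crude counting $\lceil t(1-1/\|f\|)\rceil+1\leq t\|f\|$ is valid in the regime $t>2\|f\|$ that the statement assumes.
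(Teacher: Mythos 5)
Your proposal is correct and follows essentially the same route as the paper's own proof: define $\tilde F(x)=\int_0^{f(x)}F(x,s)\,ds$, reduce the flow average to a ratio of Birkhoff sums of $\tilde F$ and $f$ plus a boundary remainder of size $\|f\|\,\|F\|(1+\|f\|)/t$, verify $\tilde F$ is $\theta$-Lipschitz via hypothesis~(\ref{ineq:24072015:3:05}), split the deviation into a $\tilde F$-term and an $f$-term, apply Theorem~\ref{cm2011} to each, and union-bound over the $O(t\|f\|)$ admissible values of $m$. One small slip: the algebraic identity should read $\frac{A}{B}-\frac{a}{b}=\frac{A-a}{B}-\frac{a}{b}\cdot\frac{B-b}{B}$ (with $B$, not $b$, in the denominator of the first term), but since both $B=\frac{1}{m}S^\sigma_m f\geq 1$ and $b=\int f\,d\mu\geq 1$, the ensuing bound $|A-a|+\|f\|\|F\|\,|B-b|$ is unchanged, so the argument goes through as you describe.
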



\begin{proof}
Suppose $t>\left\Vert f\right\Vert$ and define $\tilde{F}:\sX\to \R,x\mapsto \int_{0}^{f(x)}F(x,s)ds.$ Given $x\in \sX$ we can write $t=S^{\sigma}_{n(x)} f(x)+t(x)$ for some $n(x)\in\N$ and $f(\sigma^n x)>t(x)\geq 0,$ then $n(x)\leq t=S^{\sigma}_{n(x)} f(x)+t(x)\leq (n(x)+1) \left\Vert f\right\Vert.$ In particular, $t\geq n(x)\geq \frac{t}{\left\Vert f\right\Vert}-1.$  Keeping this in mind we have the following inequalities: 

$$
\begin{aligned}
&\mu\left\{x\in\sX: \left|  \frac{1}{t} \int_0^t F\circ \Phi^s(x,0)ds-\int F d\nu \right| \geq \epsilon \right\}\cr
&=\mu\left\{x\in\sX: \left|  \frac{S^{\sigma}_{n(x)} \tilde{F}(x)+\int_{0}^{t(x)}F(\sigma^{n(x)}x,s)ds}{S^{\sigma}_{n(x)}f(x)+t(x)} -\frac{\int \tilde{F} d\mu}{\int f d\mu} \right| \geq \epsilon \right\}\cr
&\leq\mu\left\{x\in\sX: \left|  \frac{S^{\sigma}_{n(x)} \tilde{F}(x)}{S^{\sigma}_{n(x)}f(x)} \frac{S^{\sigma}_{n(x)}f(x)}{S^{\sigma}_{n(x)}f(x)+t(x)}-\frac{\int \tilde{F} d\mu}{\int f d\mu} +\frac{\int_0^{t(x)}F(\sigma^{n(x)} x,s)ds}{t}  \right| \geq \epsilon \right\}\cr
&\leq\mu\left\{x\in\sX: \left|  \frac{S^{\sigma}_{n(x)} \tilde{F}(x)}{S^{\sigma}_{n(x)}f(x)} -\frac{t(x) }{t} \frac{S^{\sigma}_{n(x)} \tilde{F}(x)}{S^{\sigma}_{n(x)}f(x)}-\frac{\int \tilde{F} d\mu}{\int f d\mu} \right| + \frac{\left\Vert f\right\Vert \left\Vert F\right\Vert}{t}\geq \epsilon \right\}\cr
&\leq\mu\left\{x\in\sX: \left|  \frac{S^{\sigma}_{n(x)} \tilde{F}(x)}{S^{\sigma}_{n(x)}f(x)} -\frac{\int \tilde{F} d\mu}{\int f d\mu} \right| +\frac{ \left\Vert f\right\Vert^2 \left\Vert F\right\Vert}{t} + \frac{\left\Vert f\right\Vert \left\Vert F\right\Vert}{t}\geq \epsilon \right\}=:(\star),
\end{aligned}
$$
where \[\epsilon_1:= \epsilon -\frac{\left\Vert f\right\Vert \left\Vert F\right\Vert}{t}(1+\left\Vert f\right\Vert).\]
Furthermore, 
$$
\begin{aligned}
&(\star)\cr
&=\mu\left\{x\in\sX: \left|  \frac{S^{\sigma}_{n(x)} \tilde{F}(x)}{n(x)} \frac{n(x)}{S^{\sigma}_{n(x)}f(x)}-\int \tilde{F} d\mu \frac{n(x)}{S^{\sigma}_{n(x)}f(x)}+\int \tilde{F} d\mu \frac{n(x)}{S^{\sigma}_{n(x)}f(x)} -\frac{\int \tilde{F} d\mu}{\int f d\mu} \right| \geq \epsilon_1 \right\}\cr
&\leq\mu\left\{x\in\sX: \frac{n(x)}{S^{\sigma}_{n(x)}f(x)} \left|  \frac{S^{\sigma}_{n(x)} \tilde{F}(x)}{n(x)} -\int \tilde{F} d\mu\right| +\left|\int \tilde{F} d\mu\right| \left|\frac{n(x)}{S^{\sigma}_{n(x)}f(x)} -\frac{1}{\int f d\mu} \right| \geq \epsilon_1 \right\}\cr
&\leq\mu\left\{x\in\sX:  \left|  \frac{S^{\sigma}_{n(x)} \tilde{F}(x)}{n(x)} -\int \tilde{F} d\mu\right| \geq \frac{\epsilon_1}{2}\right\}\cr
&\quad+\mu\left\{x\in\sX:\left|\int \tilde{F} d\mu\right| \left|\frac{S^{\sigma}_{n(x)}f(x)}{n(x)} -\int f d\mu \right| \geq \frac{\epsilon_1}{2} \right\}=:(\star \star),
\end{aligned}
$$
where 
\[\epsilon_2:= \frac{\epsilon_1}{2}, \epsilon_3:= \frac{\epsilon_1}{2\left\Vert f\right\Vert \left\Vert F\right\Vert} \mbox{ and }n_1(t):= \left\lfloor \frac{T}{\left\Vert f\right\Vert }-1\right\rfloor.\] 
Finally, 
$$
\begin{aligned}
(\star \star)&\leq \sum_{n\in\{n_1(t),n_1(t)+1,\ldots,[t]\}}\mu\left\{x\in\sX:  \left|  \frac{S^{\sigma}_{n} \tilde{F}(x)}{n} -\int \tilde{F} d\mu\right| \geq \epsilon_2\right\}\cr
&\quad+\sum_{n\in\{n_1(t),n_1(t)+1,\ldots,[t]\}}\mu\left\{x\in\sX:\left|\frac{S^{\sigma}_{n}f(x)}{n} -\int f d\mu \right| \geq \epsilon_3 \right\}\cr
&\leq 2t\left\Vert f\right\Vert\exp\left(-\frac{\tilde{C_1}}{|\tilde{F}|^2_{\theta}}\cdot n_1(t) \cdot\epsilon_2^2\right)+ 2t\left\Vert f\right\Vert\exp\left(-\frac{\tilde{C_2}}{|f|^2_{\theta}}\cdot n_1(t) \cdot  \epsilon_3^2\right).
\end{aligned}
$$
The map $\tilde{F}$ is $\theta$-Lipschitz, indeed, suppose $x,y\in [z]_m$ for some $z\in\sX,m\in\N,$ and $f(x)>f(y)$ then 
$$
\begin{aligned}
|\tilde{F}(x)-\tilde{F}(y)| &\leq  \int_{f(y)}^{f(y)+|f|_{\theta}\theta^m} \left\Vert F\right\Vert ds+\int_{0}^{f(y)}|F(x,s)-F(y,s)|ds\cr
&\leq (|f|_{\theta} \left\Vert F\right\Vert+ C)\theta^m.
\end{aligned}
$$
Thus, we can write
$$
\begin{aligned}
&\mu\left\{x\in\sX: \left|  \frac{1}{t} \int_0^t F\circ \Phi^s(x,0)ds-\int F d\nu \right| \geq \epsilon \right\}\cr
&\leq 2t\left\Vert f\right\Vert\exp\left(-\frac{1/(4D)}{|\tilde{F}|^2_{\theta}}\cdot \left\lfloor \frac{t}{\left\Vert f\right\Vert }-1\right\rfloor \cdot\left( \frac{\epsilon -\frac{\left\Vert f\right\Vert \left\Vert F\right\Vert}{t}(1+\left\Vert f\right\Vert)}{2}\right)^2\right)\cr
&\quad+2t\left\Vert f\right\Vert\exp\left(-\frac{1/(4D)}{|f|_{\theta}^2}\cdot\left\lfloor \frac{t}{\left\Vert f\right\Vert }-1\right\rfloor \cdot\left( \frac{\epsilon -\frac{\left\Vert f\right\Vert \left\Vert F\right\Vert}{t}(1+\left\Vert f\right\Vert)}{2\left\Vert f\right\Vert \left\Vert F\right\Vert }\right)^2\right),
\end{aligned}
$$
where $D$ is the constant that depends on $\mu$ in \cite{CM2011}, Theorem 3.1.
\end{proof}

We now proceed to the proof of Theorem \ref{teo_uno}.

\begin{proof}[Proof of Theorem \ref{teo_uno}]
If $F:\Lambda\to\R$ is Lipschitz, then it satisfies (\ref{ineq:24072015:3:05}) and we can apply Proposition \ref{CMSCM}. By definition of the probability measure $\nu$ on $\Lambda$ we also have that 
$$
\begin{aligned}
Z(t)&=\nu \left\{(x,l)\in\Lambda: \left|  \frac{1}{t} \int_0^t F\circ \Phi^s(x,l)ds-\int F d\nu \right| \geq \epsilon \right\}\\
&\leq\frac{\|f\|}{\int f d\mu}\mu\left\{x\in\sX: \left|  \frac{1}{t} \int_0^t F\circ \Phi^s(x,0)ds-\int F d\nu \right| \geq \epsilon \right\}.
\end{aligned}
$$

For $t\in \R^{>\max\{2\|f\|,2  \frac{\|f\|\|F\|(1+\|f\|)}{\epsilon}\}}$ by Proposition \ref{CMSCM} we have that 
$$
\begin{aligned}
&\mu\left\{x\in\sX: \left|  \frac{1}{t} \int_0^t F\circ \Phi^s(x,0)ds-\int F d\nu \right| \geq \epsilon \right\}\cr
&\leq 2t\left\Vert f\right\Vert\exp \left\{ -C_1 \left( \frac{t}{\left\Vert f\right\Vert}-2\right) \frac{\epsilon^2}{4} \right\}\cr
&\quad+2t\left\Vert f\right\Vert\exp \left\{ -C_2 \left( \frac{t}{\left\Vert f\right\Vert}-2\right)\frac{\epsilon^2}{4(\left\Vert f\right\Vert \left\Vert F\right\Vert)^2}\right\}.
\end{aligned}
$$
Calling $C=\min\{C_1,C_2\},$ then we have that 
$$
\begin{aligned}
&\mu\left\{x\in\sX: \left|  \frac{1}{t} \int_0^t F\circ \Phi^s(x,0)ds-\int F d\nu \right| \geq \epsilon \right\}\cr
& \leq 4t\| f\|\exp \left\{ -C \left( \frac{t}{\left\Vert f\right\Vert}-2\right)\frac{\epsilon^2}{4(\left\Vert f\right\Vert \left\Vert F\right\Vert)^2}\right\}\\
& = 4t\| f\| \exp \left\{  \frac{2C\epsilon^2}{4(\left\Vert f\right\Vert \left\Vert F\right\Vert)^2} \right\}\exp \left\{\frac{-C\epsilon^2 t}{4\|f\|^3\|F\|^2} \right\}.
\end{aligned}
$$
Calling $Y=\log\left\{4\| f\| \exp \left\{  \frac{2C\epsilon^2}{4(\left\Vert f\right\Vert \left\Vert F\right\Vert)^2} \right\}\right\}$  we have that 
$$
Z(t)\leq \exp\{Y\} t\exp \left\{\frac{-C\epsilon^2 t}{4\|f\|^3\|F\|^2} \right\}
$$
and therefore, calling $X=\frac{C\epsilon^2 }{4\|f\|^3\|F\|^2},$ we have that
$$
\log Z(t)\leq Y+ \log t -Xt 
$$
that concludes the proof.
\end{proof}

\section{Proof of Theorem \ref{teo_dos}}

Along this section consider a special flow $(\Lambda,\Phi^t)$ over a subshift of finite type $(\sX,\sigma)$ with $\theta$-Lipschitz roof function $f:\sX\to\R^{\geq 1},$ for some $\theta\in (0,1).$ We consider $\mu$ an equilibrium state of H\"older potential on $\sX$ and the invariant and ergodic probability measure $\nu=\frac{\mu\times Leb}{\int f d\mu}$ on $\Lambda.$ Finally, let us consider $\{\sI_n\},  \sI_n\subset \sX$ a sequence of open sets that satisfies the nested condition (Definition \ref{nested condition}) with $\cap_{n\in\N}\sI_n=\{z\}$ for $z\in\sX.$ 

  We introduce a definition.
  
  \begin{definition}
  Define for each $n\in\N$ $\tau_{n}:\mathcal{X}\to \N$ by 
  $$
  \tau_n(x):= \inf \{m\in \N: \sigma^m(x)\in \sI_n\}
  $$
  and $\tilde{R}:\{\sI_n\}\to\R$ by
  $$
  \tilde{R}(\sI_n):= -\limsup_{t\rightarrow \infty} \frac{1}{t}\log \mu \{x: S^{\sigma}_{\tau_n(x)}f(x)\geq t\}.
  $$
  \end{definition}

  Clearly, $\tilde{R}(\sI_n)=R(\sI_n)$ for every $n\in\N,$ and so $$\frac{\tilde{R}(\sI_n)}{\mu(\sI_{n})}\int f d\mu=\frac{R(\sI_n)}{\nu(\sI_{n}\times [0,1])}.$$ We will prove here that 
\begin{equation}\label{eq_enough}
\lim_{n\to\infty}\frac{\tilde{R}(\sI_n)}{\mu(\sI_{n})}\geq \frac{\gamma(z)}{\int f d\mu + \left\Vert f\right\Vert},
\end{equation} 
that it is enough to finish the proof of Theorem \ref{teo_dos}, where 
$$W=1+\frac{\|f\|}{\int f d\mu}.$$ 

  \begin{proof}[proof of (\ref{eq_enough})]  
  Fix $0<\epsilon<1/\left\Vert f\right\Vert$ and define $B$ as in Theorem \ref{cm2011}.
 
  We have that 
  \[\frac{-B\epsilon^{3}}{\mu(\sI_n)}\to-\infty \mbox{ as $n$ tends to infinity}\] and 
  \[-\frac{ R_{\text{Discrete}}(\sI_n)}{\mu(\mathcal{U}_{n})}\to-\gamma(z) \mbox{ as $n$ tends to infinity}.\]
 
 Therefore, there exists $n_0\in\N$ and $\mathcal{N}\subset \N$ an infinite set such that for any $n\in\Z^{>n_0}$ and for any $k\in \mathcal{N}$
  \[
  0>\frac{\log \mu\left\{x: \tau_n(x)\geq k\right\}}{ \mu(\sI_n) k \int f d\mu}  >\frac{-B\epsilon^{3}}{\mu(\sI_n)},
  \]
  which implies that 
  \begin{equation}\label{extra1:1:1:1}
  \mu\left\{x: \tau_n(x)\left(\int f d\mu +\epsilon\right)\geq k\right\}>e^{-B\epsilon^{3} k}.
  \end{equation}

We write $\tau_{n}$ instead of $\tau_{n}(x),$ $S^{\sigma}_{\tau_{n}}f$ instead of $S^{\sigma}_{\tau_{n}}f(x)$ and $S^{\sigma}_{s}f$ instead of $S^{\sigma}_{s}f(x)$ when $s\in \Z^{\geq 0}.$ For any $n \in \Z^{>n_0}$ and $[\epsilon t]\in \mathcal{N},$ using inequality (\ref{extra1:1:1:1}) and the identity 
 \begin{equation}\label{24072015_ID1}
 \begin{aligned}
  \mu\left\{ x:S^{\sigma}_{\tau_{n}}f\geq t\right\}	&= \mu\left\{ x:S^{\sigma}_{\tau_{n}}f\geq t,\tau_{n}>\epsilon t, \left|\frac{1}{[\epsilon t]}S^{\sigma}_{[\epsilon t]}f-\int fd\mu\right|<\epsilon\right\}  \\
 &\quad + \mu\left\{ x:S^{\sigma}_{\tau_{n}}f\geq t, \tau_{n}>\epsilon t, \left|\frac{1}{[\epsilon t]}S^{\sigma}_{[\epsilon t]}f-\int fd\mu\right|\geq\epsilon\right\} 
\end{aligned}
 \end{equation}
 we conclude the inequality
 \begin{equation}\label{24072015_INEQ2}
\mu\left\{ x:S^{\sigma}_{\tau_{n}}f\geq t\right\} \leq  \mu\left\{ x:\tau_{n}\left(\epsilon+\int fd\mu + \left\Vert f\right\Vert\right)\geq t\right\} +2e^{-B[et]\epsilon^{2}}. 
\end{equation} 
Using (\ref{extra1:1:1:1}) in the inequality above we obtain for $\epsilon,t\in\R^{>0}$ and $n\in\N:$
 \begin{equation}\label{eq2_29_may_2015}
 \begin{array}{rcl} 
  \mu\left\{ x:S^{\sigma}_{\tau_{n}}f\geq t\right\}\leq \left(1+ 2e^{B\epsilon^{3}}\right)\mu\left\{ x:\tau_{n}\cdot\left(\epsilon+\int fd\mu+\left\Vert f\right\Vert \right)\geq t\right\}.
  \end{array}
  \end{equation} 
 Applying logarithms to both sides in (\ref{eq2_29_may_2015}), dividing on both sides by $t\in \R^{>0},$ then taking $-\limsup_{t\to +\infty}$, and finally dividing both sides by $\mu(\sI_n)$ and letting $n$ tend to infinity, we conclude that
\begin{equation}\label{24072015_INEQ3}
\lim_{n\to\infty}\frac{\tilde{R}(\sI_n)}{\mu(\sI_{n})}\geq \frac{\gamma(z)+\epsilon\left\Vert f\right\Vert}{\epsilon+\int f d\mu+\left\Vert f\right\Vert}.
  \end{equation} 
Because $\epsilon\in \R^{>0}$ is arbitrary, we conclude the result.
  \end{proof}  
  
 We now complete the proof of some identities and inequalities used in the proof of (\ref{eq_enough}).

\begin{proof}[Proof of (\ref{24072015_ID1})]
We prove the statement:
$$
 \begin{aligned} 
\mu\left\{ x:S^{\sigma}_{\tau_{n}}f\geq t\right\}&= \mu\left\{ x:S^{\sigma}_{\tau_{n}}f\geq t,\tau_{n}>\epsilon t, \left|\frac{1}{[\epsilon t]}S^{\sigma}_{[\epsilon t]}f-\int fd\mu\right|<\epsilon\right\}\cr
&+\mu\left\{ x:S^{\sigma}_{\tau_{n}}f\geq t, \tau_{n}>\epsilon t, \left|\frac{1}{[\epsilon t]}S^{\sigma}_{[\epsilon t]}f-\int fd\mu\right|\geq\epsilon\right\}.
 \end{aligned} 
$$
 In fact 
 \[
 \mu\left\{ x:S^{\sigma}_{\tau_{n}}f\geq t\right\}=\mu\left\{ x:S^{\sigma}_{\tau_{n}}f\geq t,\tau_{n}\leq\epsilon t\right\}+\mu\left\{ x:S^{\sigma}_{\tau_{n}}f\geq t,\tau_{n}>\epsilon t\right\},
 \]
 but 
 \[
 \mu\left\{ x:S^{\sigma}_{\tau_{n}}f\geq t,\tau_{n}\leq\epsilon t\right\}\leq \mu\left\{ x: \epsilon t\left\Vert f\right\Vert \geq t\right\}=0 
 \]
 because $0<\epsilon<1/\left\Vert f\right\Vert.$
\end{proof}

\begin{proof}[Proof of (\ref{24072015_INEQ2})]
It is enough to prove the following inequality
$$
 \begin{aligned} 
&\mu\left\{ x:S^{\sigma}_{\tau_{n}}f\geq t,\tau_{n}>\epsilon t, \left|\frac{1}{[\epsilon t]}S^{\sigma}_{[\epsilon t]}f-\int fd\mu\right|<\epsilon\right\}\cr
&+\mu\left\{ x:S^{\sigma}_{\tau_{n}}f\geq t, \tau_{n}>\epsilon t, \left|\frac{1}{[\epsilon t]}S^{\sigma}_{[\epsilon t]}f-\int fd\mu\right|\geq\epsilon\right\} \cr
&\leq \mu\left\{ x:\tau_{n}\left(\epsilon+\int fd\mu\right)\geq t-\sum_{k=[\epsilon t ]}^{\tau_{n}-1}f\circ \sigma^k(x)\right\}+2e^{-B[et]\epsilon^{2}}.
 \end{aligned} 
$$
It involves two inequalities:
\begin{enumerate}
\item the first is 
$$
 \begin{aligned} 
&\mu\left\{ x:S^{\sigma}_{\tau_{n}}f\geq t,\tau_{n}>\epsilon t, \left|\frac{1}{[\epsilon t]}S^{\sigma}_{[\epsilon t]}f-\int fd\mu\right|<\epsilon\right\}\cr
&\leq \mu\left\{ x:\tau_{n}\left(\epsilon+\int fd\mu\right)\geq t-\sum_{k=[\epsilon t ]}^{\tau_{n}-1}f\circ \sigma^k(x)\right\}
 \end{aligned} 
$$
that comes from 
$$
 \begin{aligned} 
&\mu\left\{ x:S^{\sigma}_{\tau_{n}}f\geq t,\tau_{n}>\epsilon t, \left| \frac{1}{[\epsilon t]}S^{\sigma}_{[\epsilon t]}f-\int fd\mu\right|<\epsilon\right\}\cr
&\leq \mu\left\{ x:S^{\sigma}_{\tau_{n}}f\geq t,\tau_{n}>\epsilon t, \frac{1}{[\epsilon t]}S^{\sigma}_{[\epsilon t]}f\leq \int fd\mu+\epsilon  \right\}\cr
&\leq \mu\left\{ x: \frac{t-\sum_{k=[\epsilon t ]}^{\tau_{n}}f\circ \sigma^k(x)}{\tau_{n}} \leq \frac{S^{\sigma}_{\tau_{n}}f-\sum_{k=[\epsilon t ]}^{\tau_{n}}f\circ \sigma^k(x)}{\tau_{n}}\leq \int fd\mu+\epsilon \right\}\cr
&\leq\mu\left\{ x:\tau_{n}\left(\epsilon+\int fd\mu\right)\geq t-\sum_{k=[\epsilon t ]}^{\tau_{n}-1}f\circ \sigma^k(x)\right\},
 \end{aligned} 
$$
\item  the second is \[\mu\left\{ x:S^{\sigma}_{\tau_{n}}f\geq t, \tau_{n}>\epsilon t, \left|\frac{1}{[\epsilon t]}S^{\sigma}_{[\epsilon t]}f-\int fd\mu\right|\geq\epsilon\right\}\leq 2e^{-B[et]\epsilon^{2}}\]
 that comes from 
 $$
 \begin{aligned} 
&\mu\left\{ x:S^{\sigma}_{\tau_{n}}f\geq t, \tau_{n}>\epsilon t, \left|\frac{1}{[\epsilon t]}S^{\sigma}_{[\epsilon t]}f-\int fd\mu\right|\geq\epsilon\right\}\cr
&\leq \mu\left\{ x:\left|\frac{1}{[\epsilon t]}S^{\sigma}_{[\epsilon t]}f-\int fd\mu\right|\geq\epsilon\right\}\leq 2e^{-B[et]\epsilon^{2}}.
 \end{aligned} 
$$
\end{enumerate}
We can apply Theorem \ref{cm2011} and this concludes the proof.
\end{proof}
 
\begin{proof}[Proof of (\ref{eq2_29_may_2015})]
We have the following inequalities:
$$
 \begin{aligned} 
& \mu\left\{ x:S^{\sigma}_{\tau_{n}}f\geq t\right\}\cr
&\leq \mu\left\{ x:\tau_{n}\left(\epsilon+\int fd\mu\right)\geq t-\sum_{k=[\epsilon t ]}^{\tau_{n}-1}f\circ \sigma^k(x)\right\}+2e^{-B[et]\epsilon^{2}}\cr
&\leq \mu\left\{ x:\tau_{n}\left(\epsilon+\int fd\mu\right)\geq t-(\tau_{n}-[\epsilon t])\left\Vert f\right\Vert \right\}+2e^{-B[et]\epsilon^{2}}\cr
&\leq \mu\left\{ x:\tau_{n}\left(\epsilon+\int fd\mu+\left\Vert f\right\Vert \right)\geq t(1+\epsilon \left\Vert f\right\Vert )\right\}+2e^{-B[et]\epsilon^{2}}\cr
&\leq \mu\left\{ x:\tau_{n}\left(\epsilon+\int fd\mu+\left\Vert f\right\Vert \right)\geq t\right\}+ 2e^{B\epsilon^{3}}\mu\left\{ x:\tau_{n}\left(\epsilon+\int fd\mu\right)\geq t\right\}\cr
&\leq \left(1+ 2e^{B\epsilon^{3}}\right)\mu\left\{ x:\tau_{n}\left(\epsilon+\int fd\mu+\left\Vert f\right\Vert \right)\geq t\right\}. 
 \end{aligned} 
$$
\end{proof}

\begin{proof}[Proof of (\ref{24072015_INEQ3})]
Inequality (\ref{eq2_29_may_2015}) implies
$$
 \begin{aligned} 
& \limsup_{t\to +\infty}\frac{1}{t}\log \mu\left\{ x:S^{\sigma}_{\tau_{n}}f\geq t\right\}\cr
&\leq \limsup_{t\to +\infty}\frac{1}{t}\log\mu\left\{ x:\tau_{n}\left(\epsilon+\int fd\mu+\left\Vert f\right\Vert \right)\geq t\right\}. 
 \end{aligned} 
$$
 Finally, we can write an inequality that does not depend on $\epsilon\in\R^{>0}$ that concludes the result:
 $$
 \begin{aligned} 
 &\lim_{n\to\infty}-\frac{1}{\mu(\sI_n)}\limsup_{t\to +\infty}\frac{1}{t}\log \mu\left\{ x:S^{\sigma}_{\tau_{n}}f\geq t\right\}\cr
&\geq \lim_{n\to\infty}-\frac{1}{\mu(\sI_n)}\limsup_{t\to +\infty}\frac{1}{t}\log\mu\left\{ x:\tau_{n}\left(\epsilon+\int fd\mu+\left\Vert f\right\Vert \right)\geq t\right\}\cr
&=\frac{\gamma(z)}{\epsilon+\int fd\mu+\left\Vert f\right\Vert },
 \end{aligned} 
 $$
 where we used Theorem \ref{teoFP} in the last equality.
\end{proof}

\cleardoublepage

\end{document}